\theoremstyle{plain} \newtheorem{theorem}[equation]{Theorem}
\newtheorem{lemma}[equation]{Lemma}
\newtheorem{corollary}[equation]{Corollary}
\newtheorem{proposition}[equation]{Proposition}
\theoremstyle{definition}
\newtheorem{remark}[equation]{Remark} \newenvironment{remarkbox}[1][]{%
\begin{remark}[#1] \pushQED{\qed}}{\popQED \end{remark}}
\newtheorem{example}[equation]{Example} \newenvironment{examplebox}[1][]{%
\begin{example}[#1] \pushQED{\qed}}{\popQED \end{example}}
\newtheorem{definition}[equation]{Definition}
\newtheorem{notation}[equation]{Notation}
\newtheorem{discussion}[equation]{Discussion}
\newenvironment{discussionbox}[1][]{%
\begin{discussion}[#1]\pushQED{\qed}}{\popQED \end{discussion}}
\newtheorem{observation}[equation]{Observation}
\newenvironment{observationbox}[1][]{%
\begin{observation}[#1]\pushQED{\qed}}{\popQED \end{observation}}
\newtheorem{construction}[equation]{Construction}
\newtheorem{setup}[equation]{Setup}
 \newcounter{step}
\newcommand{\fraka}{{\mathfrak a}}
\newcommand{\calJ}{\mathcal J}
\newcommand{\frakM}{{\mathfrak M}}
\newcommand{\calM}{\mathcal M}
\newcommand{\frakm}{{\mathfrak m}}
\newcommand{\calN}{\mathcal N}
\newcommand{\frakn}{{\mathfrak n}}
 \let\strSh\calO
\newcommand{\frakP}{{\mathfrak P}}
\newcommand{\frakp}{{\mathfrak p}}
\newcommand{\calR}{\mathcal R}
\DeclareMathOperator{\ann}{Ann}
\newcommand{\naturals}{\mathbb{N}}
\def\to{\longrightarrow}
\newcommand{\Rees}{\mathscr R}
\DeclareMathOperator{\height}{ht}
\DeclareMathOperator{\Hom}{Hom}
\DeclareMathOperator{\Spec}{Spec}
\DeclareMathOperator{\Proj}{Proj}
\DeclareMathOperator{\Min}{Min}
\DeclareMathOperator{\homology}{H}
\newcommand{\define}[1]{\emph{#1}}
\newcommand{\minus}{\ensuremath{\smallsetminus}}
\DeclareMathOperator{\image}{Im}
\def\RDerChar{\mathbf{R}}
\def\RDer{\@ifnextchar[{\R@Der}{\ensuremath{\RDerChar}}}
\def\R@Der[#1]{\ensuremath{\RDerChar^{#1}}}
\newif\ifreadkumminibib
\begin{document}
\title{The non-$F$-rational locus of Rees algebras}
\author{Nirmal Kotal}
\address{Chennai Mathematical Institute, Siruseri, Tamilnadu 603103. India}
\email{nirmal@cmi.ac.in}
\author{Manoj Kummini}
\address{Chennai Mathematical Institute, Siruseri, Tamilnadu 603103. India}
\email{mkummini@cmi.ac.in}
\thanks{Both authors
were partly supported by an Infosys Foundation fellowship.}

\subjclass{Primary: 13A30, 13A35}

\keywords{$F$-rationality, Rees algebras, parameter test submodule}
\begin{abstract}
In this note, we give a description of the parameter test submodule of Rees
algebras. This, in turn, describes the non-$F$-rational locus.
\end{abstract}

\maketitle

\section{Introduction}
\numberwithin{equation}{section}

Let $(R, \frakm)$ be a $d$-dimensional excellent local
domain of prime characteristic $p>0$, where $d \geq 2$.
Let $I$ be an $\frakm$-primary ideal.
Write $\Rees_R(I)$ for the Rees algebra $\oplus_{n \in \naturals}I^nt^n$
where $t$ is a variable of degree $1$.
For a domain $A$, we write $\overline{A}$ for its normalization.
In this paper, we prove the following theorem:

\begin{theorem}
\label{theorem:test_submodule_charp}
Let $(R,\mathfrak{m})$ be a Cohen-Macaulay complete normal local domain of
dimension at least $2$ and of characteristic $p > 0$.
Let $I$ be an $R$-ideal that has a reduction generated by a system of
parameters.
Write $\Rees = \Rees_R(I)$.
Suppose that $\overline{\Rees}$ is Cohen-Macaulay.
Then the parameter test submodule
$\tau(\omega_{\overline{\Rees}})$ equals
$\bigoplus_{n\geq 1} \tau (\omega_R, I^n)$.
Consequently,
the non-$F$-rational locus of $\overline\Rees$ is the support of the
$\overline\Rees$-module 
\[
\omega_{\overline{\Rees}} / \oplus_{n \geq 1}\tau(\omega_R,I^n).
\]
\end{theorem}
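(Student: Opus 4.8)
The plan is to convert the computation of $\tau(\omega_{\overline\Rees})$ into a degree-by-degree computation on the graded canonical module, using that $S := \overline\Rees$ is an $\naturals$-graded normal Cohen--Macaulay domain over the local ring $R$, with homogeneous maximal ideal $\frakM = \frakm \oplus S_+$ and $\dim S = d+1$. Two reductions set this up. First, parameter test submodules of pairs are insensitive to integral closure, so $\tau(\omega_R, I^n) = \tau(\omega_R, \overline{I^n})$; this is what lets me pass between $I^n$ and the ideals $\overline{I^n}$ out of which $S$ is actually built. Second, because the Frobenius on $S$ is homogeneous, the parameter test submodule $\tau(\omega_S)$ is a graded submodule of $\omega_S$. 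Hence it suffices to identify each graded piece $[\tau(\omega_S)]_n$ as an $R$-submodule of $[\omega_S]_n$.

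Then I would record the graded shape of the canonical module. Since $S$ is Cohen--Macaulay, $R$ is Cohen--Macaulay normal with canonical module $\omega_R$, and $I$ has a reduction generated by a system of parameters (so the analytic spread equals $d$ and the relevant local cohomology vanishes), the standard description of the canonical module of a normal Rees algebra gives $[\omega_S]_n = \overline{I^{\,n-1}}\,\omega_R$ for $n \geq 1$ and $[\omega_S]_0 = 0$, with $\omega_R$ regarded as a divisorial fractional ideal of $R$. One may cite this directly or recover it from graded local duality applied to $\homology^{d+1}_{\frakM}(S)$. Combined with the previous paragraph, $\tau(\omega_S) = \bigoplus_{n \geq 1}[\tau(\omega_S)]_n$ with $[\tau(\omega_S)]_n \subseteq \overline{I^{\,n-1}}\omega_R$, so the theorem amounts to the equalities $[\tau(\omega_S)]_n = \tau(\omega_R, I^n)$.

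The core is to prove these equalities. Here I would use that, $S$ being Cohen--Macaulay and (as a Cohen--Macaulay normalization) possessing a parameter test element, $\tau(\omega_S)$ is computed as the sum $\sum_{e}\sum_{\varphi}\varphi(F^e_*\omega_S)$ over all homogeneous $S$-linear trace maps $\varphi \colon F^e_*\omega_S \to \omega_S$. Passing to degree $n$ and tracking the grading is decisive: the $e$-th Frobenius carries degree $n$ to degree $p^e n$, and since $[\omega_S]_{p^e n} = \overline{I^{\,p^e n - 1}}\omega_R = \overline{I^{\,n(p^e-1)}}\cdot\overline{I^{\,n-1}}\,\omega_R$ while the target $[\omega_S]_n$ is $\overline{I^{\,n-1}}\omega_R$, the homogeneous components of the $\varphi$ that land in degree $n$ are exactly the $R$-linear maps in $\Hom_R(F^e_*\omega_R,\omega_R)$ evaluated on $F^e_*$ of the twist $\overline{I^{\,n(p^e-1)}}\omega_R$. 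This is precisely the data defining the pair test submodule $\tau(\omega_R,\overline{I^n}) = \tau(\omega_R, I^n)$. The main obstacle is to make this correspondence exact: one must match the graded pieces of $\Hom_S(F^e_*\omega_S,\omega_S)$, together with their images in degree $n$, with $\Hom_R(F^e_*\omega_R,\omega_R)$ and its images on the $\overline{I^{\,n(p^e-1)}}$-twist, and then reconcile the exponent $n(p^e-1)$ produced by this bookkeeping with the exponent occurring in the chosen definition of $\tau(\omega_R, I^n)$. The latter reconciliation is where the integral-closure invariance of the preceding paragraph and a Skoda/Brian\c{c}on--Skoda-type stability of test submodules must be invoked; getting this exponent to match on the nose, rather than merely up to a bounded shift, is the delicate technical heart of the argument.

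Finally, the consequence is formal. For a Cohen--Macaulay ring, $F$-rationality localizes and holds at a prime $\frakP$ if and only if the localized parameter test submodule fills out the whole canonical module there; hence the non-$F$-rational locus of $S$ is $\Supp\bigl(\omega_S/\tau(\omega_S)\bigr)$. Substituting the identification $\tau(\omega_S) = \bigoplus_{n\geq 1}\tau(\omega_R, I^n)$ established above yields that this locus is $\Supp\bigl(\omega_{\overline\Rees}/\bigoplus_{n \geq 1}\tau(\omega_R, I^n)\bigr)$, as claimed.
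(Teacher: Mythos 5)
Your overall framing (reduce to a degree-by-degree identification $[\tau(\omega_{\overline\Rees})]_n = \tau(\omega_R, I^n)$ and then read off the locus) agrees with the paper, and your preliminary reductions---invariance of $\fraka$-tight closure under integral closure of $\fraka$, and gradedness of $\tau(\omega_{\overline\Rees})$---are sound. But the core of your argument has a genuine gap, which you yourself flag: the entire content of the theorem is precisely the ``delicate technical heart'' you defer, namely matching the degree-$n$ components of $\sum_{e,\varphi}\varphi(F^e_*\omega_{\overline\Rees})$ with the data defining $\tau(\omega_R, I^n)$ and reconciling the exponent $n(p^e-1)$ produced by your grading bookkeeping with the exponent $np^e$ appearing in the definition of $\fraka$-tight closure. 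Saying that a Brian\c{c}on--Skoda-type stability ``must be invoked'' is not a proof; without that step you have established only the (easy) fact that both sides are graded. A second problem is your input formula $[\omega_{\overline\Rees}]_n = \overline{I^{\,n-1}}\omega_R$: this is not the ``standard description'' in the stated generality. When $\overline\Rees$ is Cohen--Macaulay, dualizing the sequence~\eqref{equation:locohExactSeq} shows that $[\omega_{\overline\Rees}]_n$ is the annihilator in $\omega_R$ of $\image(\phi_{-n})$, which is in general a proper submodule of $\overline{I^{\,n-1}}\omega_R$; the ``expected form'' of the canonical module of a Rees algebra requires additional hypotheses.

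The paper sidesteps both issues by never computing $\omega_{\overline\Rees}$ or trace maps at all: it works entirely on the Matlis-dual side with top local cohomology. Using the explicit criterion $\left[\frac{a}{f^l}\right]t^{-n}\in\image(\phi_{-n})$ if and only if $a\in\overline{I^{dl-n}}+I^{[l]}$ from \cite{kotal2023blowup}, together with the Frobenius-compatible diagram~\eqref{equation:locohExactSeqFrob}, it proves in Lemma~\ref{lemma:homomorpism_on_local_cohom} that $\psi_{-n}$ pulls $\bigl[0^{*}_{\homology^{d+1}_{\frakM}(\overline\Rees)}\bigr]_{-n}$ back to exactly $0^{*I^n}_{\homology^d_{\frakm}(R)}t^{-n}$; applying $(-)^\vee$ then yields $\tau(\omega_R,I^n)=[\tau(\omega_{\overline\Rees})]_n$ directly. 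The exponent reconciliation you postpone is essentially the content of Remark~\ref{remarkbox:jlstar} (the description of $I^{[l]^{*I^n}}$ via $\overline{I^{(dl-n)q}}+I^{[lq]}$), so to complete your route you would end up re-proving that identity in a less convenient, trace-theoretic packaging.
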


A \define{reduction} of $I$ is an $R$-ideal $J \subseteq
I$ such that $\Rees(I)$ is a finite algebra over the subring
$\Rees(J)$. If $R/\frakm$ is infinite, then
$I$ has a reduction generated by a system of parameters.
Note also that $\overline{\Rees(I)}$ is a finite $\Rees(I)$-algebra since
$R$ is excellent.

\begin{corollary}
\label{corollary:dim2}
Let $(R, \frakm)$ be a two-dimensional $F$-finite Gorenstein complete local
domain with an infinite residue field.
Let $I$ be an $\frakm$-primary ideal such that
$\overline{G} := 
\oplus_{n \in \naturals } \overline{I^n}/\overline{I^{n+1}}$ 
(i.e.  the associated graded ring for the integral closure filtration)
is Gorenstein with $a$-invariant $a(\overline{G}) = -1$.
Suppose that $R$ or $\Proj \overline{\Rees}$ is $F$-rational.
Then $\overline{\Rees}$ is $F$-rational.
\end{corollary}

Our motivation for Theorem~\ref{theorem:test_submodule_charp} is the
following result in characteristic
zero. While we believe that this might have been known, we could not find a
proof; hence we have included a proof in Section~\ref{section:charzero}.
We denote by $\calJ(\omega_R, I^n)$ the multiplier submodule of $I^n$; this
will be defined in Section~\ref{section:charzero}.

\begin{theorem}
\label{theorem:charzero}
Let $(R,\mathfrak{m})$ be a Cohen-Macaulay complete normal local domain of
dimension
$d\geq 2$ and essentially of finite type over a field of characteristic
zero.
Let $I$ be an $R$-ideal such that $\overline{\Rees(I)}$ is
Cohen-Macaulay.
Then the irrational locus of $\overline{\Rees(I)}$
is the support of the $\overline{\Rees(I)}$-module
\[
\omega_{\overline{\Rees}} / \oplus_{n \geq 1}\calJ(\omega_R,I^n).
\]
\end{theorem}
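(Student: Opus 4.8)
The plan is to reduce the statement to the computation of the multiplier submodule $\calJ(\omega_{\overline{\Rees}})$ and then to evaluate this submodule degree by degree using a single log resolution of $I$. Recall that for a normal Cohen--Macaulay local ring $A$ essentially of finite type over a field of characteristic zero, with a resolution of singularities $h\colon Z\to\Spec A$, the image of the canonical (trace) map $h_*\omega_Z\to\omega_A$ is a submodule $\calJ(\omega_A)\subseteq\omega_A$ that is independent of the choice of $Z$ by Grauert--Riemenschneider vanishing, and $A$ has rational singularities at a prime $\frakp$ if and only if $\calJ(\omega_A)_{\frakp}=(\omega_A)_{\frakp}$. Hence the irrational locus of $\Spec A$ is exactly $\Supp(\omega_A/\calJ(\omega_A))$. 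Applying this with $A=\overline{\Rees}$ (which is Cohen--Macaulay by hypothesis), the theorem follows once we establish the equality of graded $\overline{\Rees}$-modules $\calJ(\omega_{\overline{\Rees}})=\bigoplus_{n\geq1}\calJ(\omega_R,I^n)$.

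First I would fix one log resolution $f\colon W\to\Spec R$ of $I$ with $W$ smooth and $I\calO_W=\calO_W(-F)$ for an effective Cartier divisor $F$; since inverting $I$ factors the blow-up, $f$ factors as $W\xrightarrow{g}X\xrightarrow{\pi}\Spec R$, where $X=\Proj\overline{\Rees}$ is the normalized blow-up of $I$ and $\calO_X(1)=\calO_X(-E)$ for the exceptional divisor $E$, so that $\pi_*\calO_X(-nE)=\overline{I^n}$ and $g^*\calO_X(1)=\calO_W(-F)$. This single resolution computes every multiplier submodule of a power of $I$ at once, namely $\calJ(\omega_R,I^n)=f_*(\omega_W(-nF))$ for all $n\geq1$. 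Next I would form the total space $V$ of the line bundle $\calO_W(F)$, with projection $p\colon V\to W$; it is smooth of dimension $d+1=\dim\overline{\Rees}$ and carries a fibrewise $\mathbb{G}_m$-action. Because $\calO_X(1)$ is relatively ample over the affine scheme $\Spec R$, contracting the zero section produces a proper, birational, $\mathbb{G}_m$-equivariant morphism $q\colon V\to\Spec\overline{\Rees}$; indeed $\Gamma(V,\calO_V)=\bigoplus_{n\geq0}\homology^0(W,\calO_W(-nF))=\bigoplus_{n\geq0}\overline{I^n}=\overline{\Rees}$, which identifies $q$ as a resolution of singularities of $\Spec\overline{\Rees}$.

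It then remains to compute $q_*\omega_V$ as a graded module. Since $V$ is the total space of a line bundle over the smooth $W$, we have $\omega_V=p^*(\omega_W\otimes\calO_W(-F))$, and the projection formula gives
\[
p_*\omega_V=\bigl(\omega_W\otimes\calO_W(-F)\bigr)\otimes\bigoplus_{n\geq0}\calO_W(-nF).
\]
The fibre volume form carries $\mathbb{G}_m$-weight $+1$, so after accounting for this shift the weight-$n$ graded component of $p_*\omega_V$ is $\omega_W(-nF)$ for $n\geq1$ and vanishes for $n\leq0$. Pushing forward along $f$ and using $\mathbb{G}_m$-equivariance identifies the degree-$n$ piece of $q_*\omega_V=\calJ(\omega_{\overline{\Rees}})$ with $f_*(\omega_W(-nF))=\calJ(\omega_R,I^n)$, yielding the desired equality $\calJ(\omega_{\overline{\Rees}})=\bigoplus_{n\geq1}\calJ(\omega_R,I^n)$ as submodules of $\omega_{\overline{\Rees}}$.

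I expect the main obstacle to be verifying that $q\colon V\to\Spec\overline{\Rees}$ is genuinely a resolution: one must confirm that the contraction of the zero section is a well-defined proper birational morphism onto $\Spec\overline{\Rees}$ (using relative ampleness of $\calO_X(1)$ and excellence, so that $\overline{\Rees}$ is finitely generated and agrees with $\Gamma(V,\calO_V)$), and that the Cohen--Macaulay hypothesis on $\overline{\Rees}$ legitimizes both the characterization of the irrational locus and the identification of $q_*\omega_V$ with $\calJ(\omega_{\overline{\Rees}})\subseteq\omega_{\overline{\Rees}}$. The secondary delicate point is the grading bookkeeping in the final step---correctly tracking the weight of the fibre form so that $\calJ(\omega_R,I^n)$ lands in degree $n$ rather than $n-1$; a sanity check against the affine cone formula $(\omega_{\overline{\Rees}})_n=\homology^0(X,\omega_X\otimes\calO_X(n))$ confirms the normalization.
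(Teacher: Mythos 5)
Your argument is correct and is essentially the paper's own proof (following Hyry--Villamayor): both take as a resolution of $\Spec\overline{\Rees}$ the total space $\underline{\Spec}_W\bigoplus_{n\geq 0}I^n\strSh_W$ of the line bundle $I\strSh_W$ over a log resolution $W$ of $I$, compute its canonical sheaf degreewise as $\bigoplus_{n\geq 1}I^n\omega_W$, and conclude via Grauert--Riemenschneider vanishing and the Cohen--Macaulay criterion for rational singularities. The properness point you flag as the main obstacle is disposed of in the paper by writing $W=\Proj\Rees_R(\fraka)$ and identifying your $V$ with $\Proj\Rees_{\overline{\Rees}}(\fraka\overline{\Rees})$, a blow-up of $\Spec\overline{\Rees}$.
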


In Section~\ref{section:prelims} we recall the relevant definitions
and results.
Section~\ref{section:pfThmTestIdeal} contains the proofs of
Theorem~\ref{theorem:test_submodule_charp}, 
Corollary~\ref{corollary:dim2}
and an example.
As mentioned above, we sketch a proof of 
Theorem~\ref{theorem:charzero} in Section~\ref{section:charzero}.

\subsection*{Acknowledgements}
We thank the referee and Karl Schwede for helpful comments.

\section{Preliminaries}
\label{section:prelims}
\numberwithin{equation}{subsection}

All the rings we consider in this article are excellent. The letter $p$
denotes a prime number. When used in the context of the Frobenius map and
singularities in prime characteristic, $q$ denotes an arbitrary power of
$p$. Since we have to discuss a local ring and a positively graded algebra
over it, we will switch to the language of *local rings, after we introduce
the basic definitions in tight closure.
See~\cite[\S 1.5 and \S 3.6]{BrHe:CM} for definitions.
Every local ring is *local, by thinking of it as concentrated in
degree $0$. Similarly, modules over local rings can be thought of as graded
modules concentrated in degree $0$.

\subsection*{Tight closure}

Let $R$ be a ring of prime characteristic $p>0$.
Let $F:R\to R$ be the \emph{Frobenius map} $r \mapsto r^p$.
For $e\in \mathbb{N}$, we denote by ${}^e\!R$ the $R$-module $R$ via the
$e$-fold iterated Frobenius map $F^e: R\to R$.
For an $R$-ideal $I$ and $e \in \naturals$, $I^{[p^e]}$ is
the $R$-ideal generated by $\{r^{p^e} \mid r \in I \}$.
Let $M$ be an $R$-module and write $F^e(M) = {}^e\!R\otimes_RM$.
For $x \in M$, its image in $F^e(M)$ under the
natural map $M\to {{}^e\!R}\otimes_R M$ is written as $x^{p^e}$.
For a submodule $N\subseteq M$, the image of $F^e(N)$
in $F^e(M)$ is denoted by $N^{[p^e]}_M$.
Write
\[
R^{0} = R \minus \bigcup_{\frakp \in \Min R }\frakp.
\]

Let $\fraka$ be an $R$-ideal and $N \subseteq M$ be $R$-modules.
The \define{$\mathfrak{a}$-tight closure} of $N$ in 
$M$~\cite[Definition~1.1]{HaraYoshidaGenTightClMultIdeals2003}
is
\[
N^{*\mathfrak{a}}_M :=
\{z\in M \mid
\text{there exists}\;
c\in R^{0} \;\text{such that}\; c\mathfrak{a}^qz^q\subseteq N^{[q]}_M
\;\text{for all}\; q\gg 1 \}.
\]
When $\fraka = R$, this is the same as the tight closure defined
in~\cite{HochsterHunekeTCInvThyBSThm90}; we then
write $N^{*}_M$ for $N^{*R}_M$.
When $M=R$ and $N=I$ an $R$-ideal, we
write $I^{*\mathfrak{a}}$ and $I^*$ respectively.

A \define{parameter ideal} of $R$ is an $R$-ideal $I$ that is minimally
generated by $\height I$ elements.
We say that $R$ is \emph{$F$-rational} if $I=I^*$ for every parameter ideal
$I$.
A \define{parameter test element} is an element of $\cap (I :_R I^*)$ where
the intersection runs over all parameter ideals $I$.

\subsection*{Matlis duality}
Let $\calR = \oplus_{i \geq 0 } \calR_i$ be a noetherian *local ring, with
*maximal ideal $\frakM$ and
$\mathcal{M}$ a graded $\mathcal{R}$ module. 
Let $E$ be the injective hull of $\calR_0/(\calR_0 \cap \frakM)$ as an
$\calR_0$-module.
The \emph{(graded) Matlis dual} of $\mathcal{M}$
is $\mathcal{M}^{\vee} := \bigoplus_{i\in \mathbb{Z}}
\left[\mathcal{M}^{\vee} \right]_i$, where
\[
\left[\mathcal{M}^{\vee} \right]_i := 
\Hom_{\mathcal{R}_0} \left( \mathcal{M}_{-i},E \right).
\]
If $\calN \subseteq \calM^\vee$ then we think of $(\calM^\vee/\calN)^\vee$
as $\ann_{\calM}(\calN)$ for the following reason:
For each $f \in \calM \subseteq {\calM^\vee}^\vee$, 
\[
f(\calN) = 0 \;\text{if and only if}\;
f \in \Hom_R(\calM^\vee/\calN,E) = (\calM^\vee/\calN)^\vee.
\]

Now assume that $\calR$ is reduced and *complete. 
If $\mathcal{M}$ is either a noetherian or
an artinian $\mathcal{R}$-module, then
$\left(\mathcal{M}^{\vee}\right)^{\vee}=\mathcal{M}$.
Define the \define{(graded) canonical module}
\[
\omega_{\calR} = \homology^{\dim \calR}_\frakM(\calR)^\vee.
\]

Further assume that $\calR$ has prime characteristic $p>0$.
Let $\fraka$ be a graded $\calR$-ideal not contained in any minimal prime
ideal of $\calR$.
The \define{parameter test submodule}  $\tau(\omega_\calR,\fraka)$
associated to the pair $(\calR,\fraka)$ is
\[
\tau(\omega_\calR,\fraka) := 
\ann_{\omega_\calR}\left(0^{*\fraka}_{\homology^{\dim \calR}_\mathfrak{M}
(\calR)} \right).
\]
When $\fraka = \calR$, we write $\tau(\omega_\calR)$ instead of 
$\tau(\omega_\calR,\calR)$. 
When $\calR$ is Gorenstein, this agrees with the definition of test ideal
$\tau(\fraka)$ in~\cite{HaraYoshidaGenTightClMultIdeals2003}.
If $\calR$ is reduced and Cohen-Macaulay, 
for a prime ideal $\frakp \in \Spec \calR$,
$\calR_\frakp$ is $F$-rational if and only if 
$\left( \omega_\calR/\tau(\omega_\calR)\right)_\frakp = 0$.
See \cite{smith95testideals} and \cite[Remark 6.4]{st08rationalsing} for
details.

\section{Proof of Theorem~\protect{\ref{theorem:test_submodule_charp}}}
\label{section:pfThmTestIdeal}
\numberwithin{equation}{section}

Since we are concerned about $\overline{R[It]}$
in Theorem~\ref{theorem:test_submodule_charp}, we may replace 
(using~\cite[Proposition~1.3]{HaraYoshidaGenTightClMultIdeals2003})
$I$ by a minimal reduction, which, by hypothesis, is generated by a system
of parameters.

\begin{setup}
\label{setup:main}
Let $(R,\mathfrak{m})$ be an excellent Cohen-Macaulay normal local domain
of dimension $d\geq 2$ and of characteristic $p>0$.
Let $I=(f_1,\cdots,f_d)$ be a parameter ideal, that is $f_1,\cdots ,f_d$ is
a system of parameters (and hence a regular sequence).
Let $f=f_1\cdots f_d$.
Set $I^{[l]}:=(f_1^l,\cdots,f_d^l)$ for $l\geq 1$.
Denote the Rees algebra $R[It]$ (respectively $\overline{R[It]}$) by
$\Rees$ (respectively $\overline{\Rees}$).
Write $\mathfrak{M}$ for the *maximal ideal of $\Rees$.
Assume that $\overline{\Rees}$ is Cohen-Macaulay.
\end{setup}

\begin{discussionbox}
\label{discussionbox:jntightclosure}
Assume Setup~\ref{setup:main}.
Let $a\in R$ and $l, n$ be positive integers.
Then $a\in I^{[l]^{*I^n}}$ if and only if
$\left[\frac{a}{f^l} \right]\in 0^{*I^n}_{\homology^d_\frakm(R)}$.
We prove this as follows:
$a\in I^{[l]^{*I^n}}$ if and only if 
there exists a nonzero $c \in R$ 
such that $c a^q I^{nq}\subseteq I^{[lq]}$ for all $q\gg 1$.
Now, 
$c a^q I^{nq}\subseteq I^{[lq]}$ if and only if
$c I^{nq}\left[\frac{a^q}{f^{lq}}\right]=0 \in \homology^d_\frakm(R)$;
see~\cite[Proof of Theorem 2.1, p.~104--105]{LipTeiPseudoRatlSing81}.
Now apply the definition of $0^{*I^n}_{\homology^d_\mathfrak{m} (R)}$.
\end{discussionbox}

\begin{discussionbox}
\label{discussionbox:descrLC}
We quote some observations from \cite[Section 3]{kotal2023blowup}. 
These results were proved
in~\cite{HaraWatanabeYoshidaFrationality2002,HaraYoshidaGenTightClMultIdeals2003}
for the $I$-adic filtration and later generalized for the integral closure
filtration in~\cite{kotal2023blowup}.
For each $n \geq 1$ we have an exact sequence
\begin{equation}
\label{equation:locohExactSeq}
\begin{tikzcd}
0 \arrow[r] &
\left[ \homology^d_{\overline{\Rees}_{+}} (\overline{\Rees})
\right]_{-n} \arrow[r, "\phi_{-n}"]
&
\homology^d_\mathfrak{m} (R)t^{-n} \arrow[r,"\psi_{-n}"]
&
\left[ \homology^{d+1}_{\mathfrak{M}} (\overline{\Rees}) \right]_{-n}
\arrow[r]
& 0 .
\end{tikzcd}
\end{equation}
Moreover, for all $a \in R$ and all positive integers $l,n$ such that 
$dl > n$,
we have that
$\left[\frac{a}{f^l}\right]t^{-n}\in \image (\phi_{-n})$
if and only if $a \in
\overline{I^{dl-n}}+I^{[l]}$~\cite[Proposition~3.9]{kotal2023blowup}.
Let $c\in R$.
Then for all $n\geq 1$, we have,
the following commutative diagram~\cite[Discussion~3.12]{kotal2023blowup}:
\begin{equation}
\label{equation:locohExactSeqFrob}
\begin{tikzcd}
0 \arrow[r] & \left[ \homology^d_{\overline{\Rees}_{+}} (\overline{\Rees})
\right]_{-n} \arrow[r, "\phi_{-n}"] \arrow[d,"cF^e"]
& \homology^d_\mathfrak{m} (R)t^{-n} \arrow[r, "\psi_{-n}"] \arrow[d, "cF^e"]
&\left[ \homology^{d+1}_{\mathfrak{M}} (\overline{\Rees}) \right]_{-n}
\arrow[r] \arrow[d, "cF^e"]  & 0
\\
0 \arrow[r] & \left[ \homology^d_{\overline{\Rees}_{+}} (\overline{\Rees})
\right]_{-nq} \arrow[r, "\phi_{-nq}"]
& \homology^d_{\mathfrak{m}} (R)t^{-nq} \arrow[r, "\psi_{-nq}"] & \left[
\homology^{d+1}_{\mathfrak{m}} (\overline{\Rees}) \right]_{-nq} \arrow[r]
& 0 .
\end{tikzcd}
\end{equation}
(In the above diagram, $q=p^e$.)
\end{discussionbox}

\begin{remarkbox}
\label{remarkbox:jlstar}
Assume Setup~\ref{setup:main}. Let $l,n$ be integers such that $dl>n$.
Then
\[
I^{[l]^{*I^n}} =\{z\in R: \;\text{there exists}\; c\in R \minus \{0 \}
\;\text{such that}\; cz^q\in \overline{I^{(dl-n)q}}+I^{[lq]} \;\text{for
all}\; q\gg 1\}.
\]
(See~\cite[(3.5)]{kotal2023blowup}.)
\end{remarkbox}

\begin{lemma}
\label{lemma:homomorpism_on_local_cohom}
Assume Setup~\ref{setup:main}.
Let $n \geq 1$. 
Then 
\[
0^{*I^n}_{\homology^d_\mathfrak{m} (R)} t^{-n}
=
(\psi_{-n})^{-1}\left(\left[
0^{*}_{\homology^{d+1}_{\mathfrak{M}} (\overline{\Rees})} 
\right]_{-n}\right).
\]
\end{lemma}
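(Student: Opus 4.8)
The plan is to fix a single homogeneous representative and reduce the claimed identity to one scalar equivalence, then transport it along the maps $\phi,\psi$ of Discussion~\ref{discussionbox:descrLC} and the Frobenius diagram~\eqref{equation:locohExactSeqFrob}. Given $z\in\homology^d_\frakm(R)$, write $z=[a/f^l]$ with $a\in R$; after enlarging $l$ (and replacing $a$ by $af^{l'-l}$) I may assume $dl>n$, so that the image criterion of Discussion~\ref{discussionbox:descrLC} applies both in degree $-n$ and, after raising to the $q$-th power, in degree $-nq$, since $d(lq)>nq \iff dl>n$. Set $\eta:=\psi_{-n}(zt^{-n})\in[\homology^{d+1}_\frakM(\overline{\Rees})]_{-n}$. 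Because $\psi_{-n}$ is surjective with kernel $\image\phi_{-n}$, the asserted equality is equivalent to the statement that, for such a $z$, one has $z\in 0^{*I^n}_{\homology^d_\frakm(R)}$ if and only if $\eta\in 0^{*}_{\homology^{d+1}_\frakM(\overline{\Rees})}$.

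For the core equivalence I would assemble the following chain. By Discussion~\ref{discussionbox:jntightclosure}, $z\in 0^{*I^n}$ if and only if $a\in I^{[l]^{*I^n}}$, and by Remark~\ref{remarkbox:jlstar} the latter means that there is $c\in R\minus\{0\}$ with $ca^q\in\overline{I^{(dl-n)q}}+I^{[lq]}$ for all $q\gg 1$. By the image criterion this says precisely that $[ca^q/f^{lq}]t^{-nq}\in\image\phi_{-nq}$, i.e. $\psi_{-nq}([ca^q/f^{lq}]t^{-nq})=0$, for $q\gg 1$. Finally the Frobenius diagram~\eqref{equation:locohExactSeqFrob}, applied with the degree-zero multiplier $c\in R$, identifies $\psi_{-nq}([ca^q/f^{lq}]t^{-nq})=\psi_{-nq}(cF^e(zt^{-n}))=c\,\eta^q$. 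Thus $z\in 0^{*I^n}$ if and only if there exists $c\in R\minus\{0\}$ with $c\,\eta^q=0$ for all $q\gg 1$.

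It remains to match this last condition with the membership $\eta\in 0^{*}$, whose definition a priori permits a multiplier $c\in\overline{\Rees}\minus\{0\}$ rather than one in $R\minus\{0\}$; reconciling the two is the main point. One implication is immediate, as $R\minus\{0\}\subseteq\overline{\Rees}\minus\{0\}$. For the converse I would produce a test element of $\overline{\Rees}$ that already lies in $R$: over the generic point of $\Spec R$ the $\frakm$-primary ideal $I$ becomes the unit ideal, so the generic fibre of $\Spec\overline{\Rees}\to\Spec R$ is $\Spec K[t]$, with $K$ the fraction field of $R$, which is regular. Hence the singular locus of the normal domain $\overline{\Rees}$ is a closed set avoiding the generic fibre, so its image in $\Spec R$ is a constructible set missing the generic point and therefore lies in some $V(c)$ with $c\in R\minus\{0\}$; consequently $\overline{\Rees}_c$ is regular, and by the theory of test elements a power $c^N$ (still in $R\minus\{0\}$) is a test element of $\overline{\Rees}$. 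Then $\eta\in 0^{*}$ forces $c^N\eta^q=0$ for all $q$, supplying the required $R$-multiplier and closing the converse.

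The genuinely delicate step is this reconciliation of multipliers: everything before it is bookkeeping with the exact sequence~\eqref{equation:locohExactSeq} and the diagram~\eqref{equation:locohExactSeqFrob}, whereas passing from an arbitrary $\overline{\Rees}$-multiplier to one living in degree zero is exactly what forces the regular-generic-fibre argument (equivalently, the existence of a test element of $\overline{\Rees}$ coming from $R$). In writing out the details I would keep an eye on two subsidiary points: that the representative $[a/f^l]$ can be chosen with $dl>n$ throughout the argument, and that the graded submodule $0^{*}$ is homogeneous, so that restricting attention to the single degree $-n$ and its Frobenius twists $-nq$ is legitimate.
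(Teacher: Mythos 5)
Your proof is correct and follows essentially the same route as the paper: both reduce the statement, via Discussion~\ref{discussionbox:jntightclosure}, Remark~\ref{remarkbox:jlstar}, the image criterion of Discussion~\ref{discussionbox:descrLC}, and the commutative diagram~\eqref{equation:locohExactSeqFrob}, to comparing the multiplier $c$ in the two tight-closure conditions. The only difference is that where the paper simply asserts the existence of a parameter test element lying in $R$, you construct one explicitly via the regular generic fibre of $\Spec\overline{\Rees}\to\Spec R$; that is a legitimate (and more self-contained) justification of the same step.
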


\begin{proof}
We first show that 
\begin{equation}
\label{equation:homomorpism_on_local_cohom:subseteq}
\psi_{-n}\left(0^{*I^n}_{\homology^d_\mathfrak{m} (R)} t^{-n}\right) 
\subseteq \left[0^{*}_{\homology^{d+1}_{\mathfrak{M}}
    (\overline{\Rees})} \right]_{-n}
\end{equation}
Let $\xi = \left[\frac{a}{f^l}\right] \in 
0^{*I^n}_{\homology^d_{\frakm} (R)}$.
Since $\left[\frac{a}{f^l}\right] = \left[\frac{af^{l'}}{f^{l+l'}}\right]$
for every positive integer $l'$, we may assume, 
without loss of generality, that $dl>n$. 
By Discussion~\ref{discussionbox:jntightclosure}
and Remark~\ref{remarkbox:jlstar},
there exists a nonzero $c \in R$ such
that $ca^q\in \overline{I^{(dl-n)q}}+I^{[lq]}$ for all $q\gg 1$. 
Then, by Discussion~\ref{discussionbox:descrLC},
$\left[\frac{ca^q}{f^{lq}}\right]t^{-nq} \in \image (\phi_{-nq})$ for
each $q\gg 1$. 
In other words, $cF^e(\xi t^{-n})\in \image (\phi_{-np^e})$ for each
$e\gg 0$. 
Therefore, by~\eqref{equation:locohExactSeqFrob}, 
$cF^e(\psi_{-n}(\xi t^{-n}))=0$ for each $e\gg 0$, 
which proves~\eqref{equation:homomorpism_on_local_cohom:subseteq}.

Conversely, let 
$\eta \in \left[0^{*}_{\homology^{d+1}_{\mathfrak{M}}
(\overline{\Rees})} \right]_{-n}$.
Let
$\xi = \left[\frac{a}{f^l}\right] \in \homology^d_{\mathfrak{m}}(R)$ 
be such that $\psi_{-n}(\xi t^{-n}) = \eta$.
We want to show that 
$\xi \in 0^{*I^n}_{\homology^d_\mathfrak{m} (R)}$. 
There exists $c \in R$ that is a parameter test element for $R$ and
$\Rees$.
Since $cF^e(\psi_{-n}(\xi t^{-n})) = 0$ for all $e\gg 1$, 
it follows from \eqref{equation:locohExactSeqFrob}
that $cF^e(\xi t^{-n})\in \image \phi_{-np^e}$ for all $e\gg 1$. 
Use Discussion~\ref{discussionbox:descrLC}, Remark~\ref{remarkbox:jlstar}
and Discussion~\ref{discussionbox:jntightclosure}
to conclude that
$\xi \in 0^{*I^n}_{\homology^d_\mathfrak{m} (R)}$.
\end{proof}

\begin{remarkbox}
The above proposition shows that $\image \left ( \phi_{-n}\right )
\subseteq 
0^{*I^n}_{\homology^d_\mathfrak{m} (R)} t^{-n}$ for all $n \geq 1$.
One can see this more directly as follows:
Let $a \in R$ and $l, n$ be positive integers such that 
\[
\left[\frac{a}{f^l}\right] \in  \image \left ( \phi_{-n}\right )
\]
and (without loss of generality) $dl > n$.
Then we have the following sequence of implications.
\begin{align*}
a & \in \overline{I^{dl-n}} + I^{[l]}
\qquad (\text{Discussion~\ref{discussionbox:descrLC}});
\\
a & \in I^{[l]^{*I^n}}
\qquad (\text{Remark~\ref{remarkbox:jlstar}});
\\
\left[\frac{a}{f^l}\right] & \in  
0^{*I^n}_{\homology^d_\mathfrak{m} (R)}
\qquad (\text{Discussion~\ref{discussionbox:jntightclosure}}).
\qedhere
\end{align*}
\end{remarkbox}

We can now prove the main result of this note.

\begin{proof}[Proof of Theorem~\ref{theorem:test_submodule_charp}]
Since $R$ is complete, $\homology^d_\mathfrak{m} (R)^{\vee} =
\omega_R$ and $\homology^{d+1}_{\mathfrak{M}}
(\overline{\Rees})^{\vee}=\omega_{\overline{\Rees}}$. 
Let $n \geq 1$.
We want to show that
\begin{equation}
\label{equation:tauOmegaReesDegn}
\tau(\omega_R,I^n)
=\left[\tau(\omega_{\overline{\Rees}})\right]_n.
\end{equation}
By Lemma~\ref{lemma:homomorpism_on_local_cohom} 
the natural map (coming from $\psi_{-n}$) 
\[
\frac{\homology^d_\frakm(R)}{0^{*I^n}_{\homology^d_\mathfrak{m} (R)}}
\to
\left[ \frac{\homology^{d+1}_{\mathfrak{M}}
        (\overline{\Rees})}{0^{*}_{\homology^{d+1}_{\mathfrak{M}}
        (\overline{\Rees})}}
\right]_{-n}.
\]
is an isomorphism.
Hence the map
\[
\left[ \left(\frac{\homology^{d+1}_{\mathfrak{M}}
        (\overline{\Rees})}{0^{*}_{\homology^{d+1}_{\mathfrak{M}}
        (\overline{\Rees})}}\right)^{\vee}
\right]_{n}
\to
\left( \frac{\homology^d_\frakm(R)}{0^{*I^n}_{\homology^d_\frakm(R)}}
\right)^{\vee}
\]
is an isomorphism.
This proves~\eqref{equation:tauOmegaReesDegn}.
\end{proof}

We now give some applications of
Theorem~\ref{theorem:test_submodule_charp}.
Prior to that, we describe the graded components 
$\left[\omega_{\overline{\Rees}}\right ]_n$.
Let $X = \Proj (\overline{\Rees} )$, 
$\overline{G} := 
\oplus_{n \in \naturals } \overline{I^n}/\overline{I^{n+1}}$ 
(i.e.  the associated graded ring for the integral closure filtration),
$Y = \Proj \overline{G}$ (the exceptional divisor in $X$)
and $U = X \minus Y$.
Then we can translate~\eqref{equation:locohExactSeq} as below:
\[
0 \to \homology^{d-1}(X, (I\strSh_X )^{-n})
\to \homology^{d-1}(U, \strSh_U)
\to \homology_Y^d(X, (I\strSh_X )^{-n}) \to 0
\]
Taking Matlis duals, we see that
\[
\left[\omega_{\overline{\Rees}}\right ]_n
= \Hom_R(\homology_Y^d(X, (I\strSh_X )^{-n}), E)
= \homology^0(X, I^n \omega_X)
\]
where the last equality follows from `duality with 
supports'~\cite[Theorem, p~.188]{LipmanDesingTwoDim1978}.
(The above paragraph did not use the hypothesis that $\overline{\Rees}$ is
Cohen-Macaulay.)

When $R$ is Gorenstein, we have another description of 
$\omega_{\overline{\Rees}}$ from
\cite{kotal2023blowup} (where this is proved more generally for
$I$-admissible filtrations).
Recall that $\overline{\Rees}$ is Cohen-Macaulay, by assumption.

\begin{proposition}[\protect{\cite[Proposition~2.2.3]{kotal2023blowup}}]
\label{proposition:descrOmegaR}
Let $(R,\mathfrak{m})$ be a Gorenstein local ring, of dimension
$ d\geq 2$.
Let $I$ be an ideal generated by a system of parameters.
Then $\left[\omega_{\overline{\Rees}}\right]_n=I^n:\overline{ I^{d-1}}$
for each $n \geq 1$.
If, further, $\overline{G}$ is Gorenstein, then
$\left[\omega_{\overline{\Rees}}\right]_n=\overline{ I^{n+a+1}}$
where $a$ is the $a$-invariant of $\overline{G}$.
\end{proposition}

We are now ready to prove Corollary~\ref{corollary:dim2}.

\begin{proof}[Proof of Corollary~\protect{\ref{corollary:dim2}}]
%\label{proof:
Without loss of generality, $I$ is generated by a system of parameters.
First assume that $R$ is $F$-rational. Since it is Gorenstein, it is
$F$-regular. Therefore 
\[
\overline{I^n} \subseteq \tau(I^n) = 
\left[\tau(\omega_{\overline{\Rees}})\right]_n
\subseteq \left[\omega_{\overline{\Rees}}\right]_n = 
\overline{I^n} 
\]
for all $n \geq 1$.
(Use the $F$-regularity of $R$, Theorem~\ref{theorem:test_submodule_charp},
and Proposition~\ref{proposition:descrOmegaR}.)
Hence, by Theorem~\ref{theorem:test_submodule_charp}, $\overline{\Rees}$ is
$F$-rational. (This does not need the hypothesis that $R/\frakm$ is
infinite.)

Now assume that $\Proj \overline{\Rees}$ is $F$-rational.
Hence $\tau(I^n) = \overline{I^n}$ for all $n \gg 1$.
Since $\dim R = 2$, we see that $\overline{I^{n-1}}\tau(I) = 
\overline{I^n}$ for all $n \gg 1$.
Therefore $\tau(I) \subseteq \overline{I}$ is a reduction of 
$\overline{I}$.
Since $R/\frakm$ is infinite, there exists a minimal reduction $I'
\subseteq \tau(I)$, generated by a system of parameters. 
Since $\overline{{I'}^n} = \overline{I^n}$ for all $n$, we may replace $I$
by $I'$ and assume that $I \subseteq \tau(I)$.

From~\cite[Theorem~1.4]{WatanabeYoshidaWangsTheorem2012}, we see that
$I :_R \overline{I} = I + \tau(I)$
On the other hand, since $\overline{G}$ is Gorenstein with $a(\overline{G}
) = -1$, we have that
$I :_R \overline{I} = \left[\omega_{\overline\Rees}\right ]_1 =
\overline{I}$.
Hence $ \tau(I )=  \overline{I}$.
Therefore $ \tau(I^n )=  \overline{I^n} = 
\left[\omega_{\overline\Rees}\right ]_n$ for all $n \geq 1$.
By Theorem~\ref{theorem:test_submodule_charp}, $\overline{\Rees}$ is
$F$-rational.
\end{proof}

\begin{examplebox}
%\label{examplebox:
(\cite[Example~3.9(2)]{HaraWatanabeYoshidaFrationality2002})
Consider the ring 
$S = \Bbbk[x,y,z]/(z^2+xy^2+yx^2)$
where $\Bbbk$ is an $F$-finite infinite field of characteristic $2$
and $\frakn = (x,y,z )S$.
Then 
\[
\left[ S[\frakn t]_{xt}\right]_0
\simeq \Bbbk[x,y',z']/(z^2+x'y'(1+y')).
\]
The singular locus of the above ring has two points: $(x,y',z')$ and
$(x,1+y',z')$. When we localize at $(x,y',z')$, we get
\[
\Bbbk[x,y',z']_{(x,y',z')}/(z^2+x'y')
\]
which is $F$-rational. (It is $F$-injective by Fedder's criterion and its
$a$-invariant is negative.) The case $(x,1+y',z')$ is similar.
A similar argument holds for $\left[ S[\frakn t]_{yt}\right]_0$.
Since $(x,y)$ is a reduction for $\frakn$, it follows that $\Proj 
S[\frakn t]$ is $F$-rational.

Now let $(R, \frakm )$ be the completion of $S$ at $\frakn$,
$I = (x,y )R$ and $\Rees = \Rees(\frakm)$. Then $\Rees$ is Cohen-Macaulay
and $\Proj \Rees$ is $F$-rational. In particular $\Rees$ is normal.
Applying Corollary~\ref{corollary:dim2}, we see that 
$\Rees$ is $F$-rational.
\end{examplebox}

\begin{observationbox}
%\label{observationbox:
We state a corollary of Theorem~\ref{theorem:test_submodule_charp} that we
believe is currently unknown.
With notation as in Theorem~\ref{theorem:test_submodule_charp},
if there exists $n \geq d$ such that 
\begin{equation}
\label{equation:zeroStarZeroSomen}
\left[{0^{*}_{\homology^{d+1}_{\mathfrak{M}}
        (\overline{\Rees})}}
\right]_{-n} = 0
\end{equation}
then 
\begin{equation}
\label{equation:zeroStarZeroAlln}
\left[{0^{*}_{\homology^{d+1}_{\mathfrak{M}}
        (\overline{\Rees})}}
\right]_{-j} = 0
\end{equation}
for all $j \geq n$.
Indeed,~\eqref{equation:zeroStarZeroSomen} holds if and only if
$\tau(\omega_R, I^n) = \left[\omega_{\overline{\Rees}}\right]_n$.
Since $n \geq d$, it follows that 
\[
\tau(\omega_R, I^{n+1}) 
\subseteq
\left[\omega_{\overline{\Rees}}\right]_{n+1}
=
I \left[\omega_{\overline{\Rees}}\right]_{n}
=
I\tau(\omega_R, I^{n})  
\subseteq
\tau(\omega_R, I^{n+1}) 
\]
from which we obtain~\eqref{equation:zeroStarZeroAlln} with $j = n+1$.
The rest follows by induction.
\end{observationbox}

\section{Characteristic zero}
\label{section:charzero}

In this section, we sketch a proof of Theorem~\ref{theorem:charzero}.

\begin{definition}
Let $(R,\mathfrak{m})$ be an excellent Cohen-Macaulay normal local domain
of dimension $d\geq 2$ 
and essentially of finite type over a field of characteristic $0$.
Let $I$ be an $R$-ideal.
Let $X \to \Spec R$ be a desingularization such that $I \strSh_X$ is
invertible.
Let $n \geq 0$.
Define the \emph{multiplier submodule} $\calJ(\omega_R, I^n)$ to be the
$R$-submodule $\homology^0(X, I^n\omega_X)$ of $\omega_X$.
\end{definition}

In the context of the above definition, suppose that $Y \to \Spec R$ is
another desingularization such that $I \strSh_Y$ is invertible.
We need to show that $\homology^0(Y, I^n\omega_Y) =
\homology^0(X, I^n\omega_X)$.
To this end, we may assume that the map $Y \to \Spec R$ factors through the
map $X \to \Spec R$. Write $f$ for the map $Y \to X$.
Note that
\[
\homology^0(Y, I^n\omega_Y) = \homology^0(X, f_*(I^n\omega_Y))
= \homology^0(X, f_*\omega_Y \otimes I^n\strSh_X)
= \homology^0(X, \omega_X \otimes I^n\strSh_X)
= \homology^0(X, I^n\omega_X).
\]
The second equality is by the projection formula; the next follows since
$X$ is non-singular and, hence, has rational singularities.

\begin{proof}[Proof of Theorem~\ref{theorem:charzero}]
We follow the argument
of~\cite[Remark~2.8]{HyryVillamayorBrianconSkoda1998}.
Write $A = \overline{\Rees(I)}$.
Let $X \to \Spec R$ be a desingularization such that $I \strSh_X$ is
invertible.
Write $X = \Proj \Rees_R(\fraka)$ for some $R$-ideal $\fraka$.
Let $Z = \Proj \Rees_A(\fraka A)$.
Then $Z = \underline{\Spec} \oplus_{n \geq 0} I^n\strSh_X$;
hence the map $Z \to \Spec A$ is a desingularization.
Note that $\omega_Z = \oplus_{n \geq 1} I^{n}\omega_X$.
Thus we get an inclusion
\[
\oplus_{n \geq 1}\calJ(\omega_R, I^n) = \Gamma(Z, \omega_Z) \subseteq
\omega_A.
\]
Moreover, by the Grauert-Riemenschneider vanishing theorem,
$\homology^i(Z, \omega_Z)  = 0$ for all $i \neq 0$.
Let $\frakP \in \Spec A$.
Therefore $A_\frakP$ is a rational singularity if and only if
$\Gamma(Z, \omega_Z)_\frakP = \omega_{A_\frakP}$
if and only if
$\frakP$ is not in the support of $\omega_A/\Gamma(Z, \omega_Z)$.
\end{proof}

\begin{remark}
After seeing the first version of this paper on the arXiv preprint of this paper, Karl Schwede pointed us to 
\newline
\cite{HaconLamarcheSchwedeGlobalGen2021}
and 
\cite{BhattMaPatkfSchwTuckWaldWitsz2023}
where a definition similar to that of 
$\calJ(\omega_R, I^n)$ above has been made and a result similar to
Theorem~\ref{theorem:charzero} has been proved (among other things).
\end{remark}

\ifreadkumminibib
\bibliographystyle{alpha}
\bibliography{kummini}

\else

\def\cfudot#1{\ifmmode\setbox7\hbox{$\accent"5E#1$}\else
	\setbox7\hbox{\accent"5E#1}\penalty 10000\relax\fi\raise 1\ht7
	\hbox{\raise.1ex\hbox to 1\wd7{\hss.\hss}}\penalty 10000 \hskip-1\wd7\penalty
	10000\box7}

\fi %
\end{document}